\DeclareSymbolFont{cyrletters}{OT2}{wncyr}{m}{n}
\DeclareMathSymbol{\Sha}{\mathalpha}{cyrletters}{"58}
\numberwithin{equation}{section}
\title[A statistical view on a conjecture of Lang]{A statistical view on the conjecture of Lang about the canonical height on elliptic curves}
\author{Pierre Le Boudec}
\subjclass{$11$D$45$, $11$G$05$, $11$G$50$, $14$G$05$}
\keywords{Elliptic curves, rational points, canonical height}
\address{Departement Mathematik und Informatik \\ Fachbereich Mathematik \\ Spiegelgasse $1$ \\ $4051$ Basel \\ Switzerland}
\email{pierre.leboudec@unibas.ch}
\begin{document}

\makeatletter
\def\imod#1{\allowbreak\mkern10mu({\operator@font mod}\,\,#1)}
\makeatother

\newtheorem{lemma}{Lemma}
\newtheorem{theorem}{Theorem}
\newtheorem{corollary}{Corollary}
\newtheorem{proposition}{Proposition}
\newtheorem{conjecture}{Conjecture}
\newtheorem{conj}{Conjecture}
\renewcommand{\theconj}{\Alph{conj}}
\newtheorem{theo}{Theorem}
\renewcommand{\thetheo}{\Alph{theo}}

\newcommand{\vol}{\operatorname{vol}}
\newcommand{\D}{\mathrm{d}}
\newcommand{\rank}{\operatorname{rank}}
\newcommand{\Pic}{\operatorname{Pic}}
\newcommand{\Gal}{\operatorname{Gal}}
\newcommand{\meas}{\operatorname{meas}}
\newcommand{\Spec}{\operatorname{Spec}}
\newcommand{\eff}{\operatorname{eff}}
\newcommand{\rad}{\operatorname{rad}}
\newcommand{\sq}{\operatorname{sq}}
\newcommand{\tors}{\operatorname{tors}}
\newcommand{\Cl}{\operatorname{Cl}}
\newcommand{\Reg}{\operatorname{Reg}}
\newcommand{\Sel}{\operatorname{Sel}}
\newcommand{\corank}{\operatorname{corank}}
\newcommand{\an}{\operatorname{an}}

\begin{abstract}
We adopt a statistical point of view on the conjecture of Lang which predicts a lower bound for the canonical height of non-torsion rational points on elliptic curves defined over $\mathbb{Q}$. More specifically, we prove that among the family of all elliptic curves defined over
$\mathbb{Q}$ and having positive rank, there is a density one subfamily of curves which satisfy a strong form of Lang's conjecture.
\end{abstract}

\maketitle

\tableofcontents

\section{Introduction}

Adopting a statistical point of view on Diophantine problems often allows to gain insight into the questions at hand. This idea was recently illustrated in the context of the arithmetic of elliptic curves by the astonishing achievements of Bhargava and Shankar (see \cite{MR3272925}, \cite{MR3275847}, \cite{BhargavaShankar1} and \cite{BhargavaShankar2}) and of Bhargava, Skinner and Zhang (see \cite{MR3237733} and \cite{BhargavaSkinnerZhang}).

A famous conjecture of Lang predicts a lower bound for the canonical height of non-torsion rational points on elliptic curves defined over number fields. The goal of this article is to attack this conjecture over the field of rational numbers adopting the statistical point of view of Bhargava and his collaborators. We note here that we have restricted ourselves to the field $\mathbb{Q}$ as some of the tools involved in our work do not generalize immediately to the case of general number fields.

We start by recalling Lang's Conjecture \cite[Page $92$]{MR518817}. Let $E$ be an elliptic curve defined over $\mathbb{Q}$. The celebrated Mordell-Weil Theorem asserts that $E(\mathbb{Q})$ is a finitely generated abelian group. We respectively let
$\rank E(\mathbb{Q})$ and $E(\mathbb{Q})_{\tors}$ denote its rank and its torsion subgroup. In addition, we let $\hat{h}_E$ be the canonical height on $E$ (see Subsection~\ref{Subsection heights} for its definition) and finally we let $\Delta_{\min}(E)$ be the minimal discriminant of $E$.

\begin{conj}[Lang]
\label{Lang Conjecture}
There exists a constant $c > 0$ such that for any elliptic curve $E$ defined over $\mathbb{Q}$ and any
$P \in E(\mathbb{Q}) \smallsetminus E(\mathbb{Q})_{\tors}$, we have the lower bound
\begin{equation*}
\hat{h}_E(P) > c \log | \Delta_{\min}(E) |.
\end{equation*}
\end{conj}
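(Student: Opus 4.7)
The plan is to attack Lang's conjecture via the standard decomposition of the canonical height into a sum of Néron local heights,
\[
\hat{h}_E(P) = \sum_v \hat{\lambda}_{E,v}(P),
\]
where $v$ runs over the places of $\mathbb{Q}$. With the usual normalisation, the local contributions at primes of good reduction are non-negative, so the whole question reduces to producing lower bounds for the Archimedean term $\hat{\lambda}_{E,\infty}(P)$ and for $\hat{\lambda}_{E,p}(P)$ at the primes $p \mid \Delta_{\min}(E)$, each proportional to its natural share of $\log|\Delta_{\min}(E)|$.

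At a prime $p$ of multiplicative reduction, Tate's $p$-adic uniformisation provides an explicit formula for $\hat{\lambda}_{E,p}(P)$ as a value of a Bernoulli polynomial times $v_p(\Delta_{\min}(E))\log p$, the exact value being determined by which component of the Néron model contains the reduction of $P$. After replacing $P$ by a small controlled multiple $mP$ — whose index $m$ is bounded in terms of the group of components — one secures a bound
\[
\hat{\lambda}_{E,p}(mP) \gg v_p(\Delta_{\min}(E))\log p.
\]
Primes of additive reduction contribute only $O(\log p)$, which one collects into the conductor $N_E$. Summing over bad primes and combining with the standard lower bound for $\hat{\lambda}_{E,\infty}(P)$ coming from the real period, one arrives at an inequality of the shape
\[
\hat{h}_E(P) \gg \log|\Delta_{\min}(E)| - O(\log N_E).
\]

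The decisive obstacle is the gap between $\log|\Delta_{\min}(E)|$ and $\log N_E$: if one had a uniform inequality $\log|\Delta_{\min}(E)| \leq \kappa \log N_E$, namely Szpiro's conjecture, the argument would close and Lang's conjecture would follow. This is in fact precisely the implication established by Hindry and Silverman. Since Szpiro is essentially equivalent to the abc conjecture, any unconditional proof along these lines would amount to a proof of abc, which lies beyond current techniques. The main obstacle is therefore not a step inside the sketched argument but the missing unconditional input controlling $\Delta_{\min}(E)$ in terms of the conductor. This is exactly why the present paper turns to a statistical point of view: averaging over the family of elliptic curves should effectively discard the curves with pathologically large $\Delta_{\min}/N_E$, and one can hope to recover Lang's inequality on a density-one subfamily without proving Szpiro.
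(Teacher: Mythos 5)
The statement you were asked about is labelled Conjecture~\ref{Lang Conjecture} in the paper: it is Lang's conjecture itself, and the paper does not prove it --- no proof is known. So there is no ``paper's own proof'' to compare against, and your text, quite correctly, is not a proof either: it is a sketch of the Hindry--Silverman conditional argument, and you yourself identify the irreducible gap, namely the absence of any unconditional control of $\log|\Delta_{\min}(E)|$ in terms of $\log N_E$. That diagnosis matches the paper's own discussion in the introduction, which cites Hindry--Silverman for the implication ``Szpiro $\Rightarrow$ Lang'' and explains that this is precisely why the paper retreats to a statistical statement (Theorem~\ref{Main Theorem}), proved on a density-one subfamily where the discriminant is forced to be close to squarefree (Lemma~\ref{Size F delta}) so that the local-height comparison of Lemma~\ref{Difference} goes through with only an $O(\delta \log X)$ loss.

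Two small corrections to your sketch. First, the paper explicitly notes that Silverman has shown a \emph{significantly weaker} version of Szpiro's conjecture already implies Lang's conjecture, so your claim that an unconditional proof along these lines ``would amount to a proof of abc'' overstates the equivalence; Lang's conjecture is expected to be strictly weaker than Szpiro/abc. Second, the archimedean local height is not controlled merely by ``the real period'': with the standard normalisation it satisfies a lower bound of the shape $\lambda_{\infty}(P) - \tfrac12\log^+|x(P)| \geq -\tfrac1{12}\log|\Delta| - \tfrac18 \log^+|j| - O(1)$ (this is exactly inequality \eqref{Inequality 3} in the paper's Lemma~\ref{Difference}), i.e.\ it \emph{costs} a term of size $\tfrac1{12}\log|\Delta|$, which is why the multiplicative primes must be shown to contribute a positive proportion of $\log|\Delta_{\min}|$ before anything survives. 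Your sketch elides this cancellation, which is where the real work in Hindry--Silverman (and in the paper's Lemma~\ref{Difference}) lives.
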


Conjecture \ref{Lang Conjecture} is widely believed to be buried very deep. Nevertheless, Silverman has established the analog conjecture for elliptic curves with integral $j$-invariant (see \cite{MR630588}) and for families of twists of a fixed elliptic curve (see \cite{MR747871}). Moreover, the work of Hindry and Silverman \cite{MR948108} implies in particular that Conjecture \ref{Lang Conjecture} follows from Szpiro's Conjecture \cite{MR1065151}. It is also worth mentioning that Silverman \cite{MR2645981} has proved that a significantly weaker version of Szpiro's Conjecture also implies Conjecture~\ref{Lang Conjecture}. As a result, it seems unlikely that Conjecture~\ref{Lang Conjecture} implies the standard version of Szpiro's Conjecture.

We now introduce the material needed to state our results. For $(A,B) \in \mathbb{Z}^2$ such that $4 A^3 + 27 B^2 \neq 0$, we let $E_{A,B}$ be the elliptic curve defined over $\mathbb{Q}$ by the Weierstrass equation
\begin{equation*}
y^2 = x^3 + A x + B.
\end{equation*}
We also let $\mathcal{F}$ be the family of all elliptic curves $E_{A,B}$ where $(A,B) \in \mathbb{Z}^2$ satisfies the assumption that for any prime number $p$, we have either $p^4 \nmid A$ or $p^6 \nmid B$. In addition, we define the height of an elliptic curve
$E_{A,B} \in \mathcal{F}$ by
\begin{equation*}
H(E_{A,B}) = \max \{ 4 |A|^3, 27 B^2 \}.
\end{equation*}
Also, for $E_{A,B} \in \mathcal{F}$, we introduce the quantity $\mu(E_{A,B})$ defined by
\begin{equation*}
\mu(E_{A,B}) = \min \{ \hat{h}_{E_{A,B}}(P) : P \in E_{A,B}(\mathbb{Q}) \smallsetminus E_{A,B}(\mathbb{Q})_{\tors} \},
\end{equation*}
if $\rank E_{A,B}(\mathbb{Q}) \geq 1$ and $\mu(E_{A,B}) = \infty$ if $\rank E_{A,B}(\mathbb{Q}) = 0$. Finally, we set
\begin{equation*}
\mathcal{F}(X) = \{ E_{A,B} \in \mathcal{F} : H(E_{A,B}) \leq X \},
\end{equation*}
and
\begin{equation*}
\mathcal{F}_{\geq 1}(X) = \{ E_{A,B} \in \mathcal{F}(X) : \rank E_{A,B}(\mathbb{Q}) \geq 1\}.
\end{equation*}

We note that there exists a variant of Conjecture \ref{Lang Conjecture} where $\log | \Delta_{\min}(E) |$ is replaced by
$\max \{\log | \Delta_{\min}(E) |, h(j(E):1) \}$ where $h : \mathbb{P}^1(\mathbb{Q}) \to \mathbb{R}_{\geq 0}$ is the logarithmic Weil height and $j(E)$ is the $j$-invariant of the curve $E$. Silverman has pointed out to the author that this lower bound is often more useful for applications. Noting that we clearly have
\begin{equation*}
\max \{\log | \Delta_{\min}(E_{A,B}) |, h(j(E_{A,B}):1) \} \leq \log H(E_{A,B}) + \log 1728,
\end{equation*}
we see that our methods allow us to also deal with this stronger variant.

The main result of this article is the following.

\begin{theorem}
\label{Main Theorem}
Let $c < 7/24$ be fixed. We have the equality
\begin{equation*}
\lim_{X \to \infty}
\frac{\# \{ E_{A,B} \in \mathcal{F}_{\geq 1}(X) : \mu(E_{A,B}) > c \log H(E_{A,B}) \}}{\# \mathcal{F}_{\geq 1}(X)} = 1.
\end{equation*}
\end{theorem}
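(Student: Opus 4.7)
Let $\mathcal{B}(X)$ denote the subset of $\mathcal{F}_{\geq 1}(X)$ consisting of curves $E$ for which $\mu(E) \leq c \log H(E)$. The statement of the theorem is equivalent to $\#\mathcal{B}(X) = o(\#\mathcal{F}_{\geq 1}(X))$, and since the works of Bhargava and Shankar, together with earlier constructions of positive-rank curves, guarantee $\#\mathcal{F}_{\geq 1}(X) \gg X^{5/6}$, it is enough to establish the absolute upper bound $\#\mathcal{B}(X) = o(X^{5/6})$.

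The first step is a reduction to counting integer tuples. Silverman's height difference inequality provides a bound $h_x(P) \leq 2 \hat{h}_{E}(P) + \gamma \log H(E) + O(1)$ with an explicit constant $\gamma$ stemming from the $j$-invariant contribution. Writing $P = (u/w^2, v/w^3)$ in lowest terms and combining with the hypothesis $\hat{h}_{E}(P) \leq c \log H(E)$ yields $\max(|u|, w^2) \ll H(E)^{2c + \gamma}$. Setting $\alpha := 2c + \gamma$, the inequality $c < 7/24$ translates, for the appropriate value of $\gamma$, into $\alpha < 2/3$. The problem thus reduces to showing
\[
N(X, \alpha) := \#\bigl\{(A,B,u,w,v) \in \mathbb{Z}^5 : v^2 = u^3 + Auw^4 + Bw^6,\ \max(4|A|^3, 27B^2) \leq X,\ \max(|u|, w^2) \leq X^{\alpha}\bigr\} = o(X^{5/6})
\]
whenever $\alpha < 2/3$.

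For each triple $(u, w, v)$ with $\gcd(u, w) = 1$ and $w^4 \mid v^2 - u^3$, the defining equation is linear in $(A, B)$ and cuts out a line with primitive direction vector $(w^2, -u)$; a standard lattice-point count bounds the number of its integer solutions inside the box $\{\max(4|A|^3, 27B^2) \leq X\}$ by $O\bigl(1 + \min(X^{1/3}/w^2, X^{1/2}/|u|)\bigr)$. Stratifying the sum over $(u, w, v)$ by comparing $|u|$ to $w^2 X^{1/6}$ and to $X^{1/2}$ produces three regimes: a small-$x$ regime $|u| \leq w^2 X^{1/6}$ in which $v$ varies over an interval of length $O(w^3 X^{1/4})$ near the origin; an intermediate regime $w^2 X^{1/6} < |u| \leq X^{1/2}$ in which $v$ concentrates in two short arcs near $\pm u^{3/2}$; and a large regime $|u| > X^{1/2}$ in which the line contains at most one lattice point in the box.

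The main obstacle is the large regime, where a direct bound is too weak. The essential saving comes from the fact that the unique candidate lattice point can only lie in the box if the canonical representative $B_0 \equiv (v^2 - u^3) w^{-6} \pmod{u}$ falls into the narrow window $[-X^{1/2}, X^{1/2}] \subset (-u/2, u/2]$; by CRT this condition combined with $v^2 \equiv u^3 \pmod{w^4}$ pins $v^2$ modulo $u w^4$ into a set of cardinality $O(X^{1/2 + \epsilon})$. Applying a Weil-type equidistribution estimate for square-root residues then shows that the number of admissible $v$ in the length-$L$ interval is $O\bigl(L X^{1/2 + \epsilon}/(u w^4)\bigr) + O(X^{\epsilon})$, whose sum over $(u, w)$ yields a contribution of order $X^{\alpha/2 + 1/2 + \epsilon}$ from this regime. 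This is $o(X^{5/6})$ precisely when $\alpha < 2/3$; the two remaining regimes, together with the boundary cases $u = 0$ and torsion points (which by Nagell--Lutz force $w = 1$), contribute only $O(X^{3/4 + \epsilon})$ and do not affect the final bound.
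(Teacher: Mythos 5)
Your overall skeleton — parametrize rational points via $P=(u/w^2, v/w^3)$, view the Weierstrass equation as a linear condition on $(A,B)$ for fixed $(u,w,v)$, and stratify by comparing $|u|$ to $w^2 X^{1/6}$ — is indeed the structure of the paper's proof. But there are two genuine gaps, either of which by itself would prevent the argument from reaching the exponent $7/24$.

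\textbf{The height comparison constant.} You set $\alpha = 2c + \gamma$ and assert that for ``the appropriate value of $\gamma$'' one has $\alpha < 2/3$ when $c < 7/24$; this requires $\gamma < 1/12$. But Silverman's general height-difference inequality for $E_{A,B}$ has a lower bound of the shape $\hat h - \tfrac12 h_x \geq -\tfrac1{24}h(j) - \tfrac1{12}\log|\Delta| - O(1)$, so that in terms of $\log H(E)$ the constant $\gamma$ is about $1/8$, not $1/12$. With $\gamma = 1/8$ and $c$ close to $7/24$ you get $\alpha$ close to $17/24 > 2/3$, and your counting bound no longer applies. The paper gets around this in a decisive way: it first restricts to the subfamily $\mathcal{F}_\delta(X)$ of curves whose discriminant is large and has small squarefull part, with the complement shown to have cardinality $O(X^{5/6-5\delta/6})$, and then proves (via a local-height analysis at primes dividing the discriminant with multiplicity one versus $\geq 2$) that on this subfamily $\hat h - \tfrac12 h_x \geq -\tfrac{\delta}{2}\log X - 3$. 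This makes the ``$\gamma$'' arbitrarily small, and it is precisely this near-squarefree reduction that your proposal is missing.

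\textbf{The small-$x$ regime and the congruence count.} You assert that the regime $|u| \leq w^2 X^{1/6}$ contributes only $O(X^{3/4+\epsilon})$. This is not substantiated and is in fact the bottleneck. In that regime one has to count pairs $(u,v)$ with $v^2 \equiv u^3 \pmod{w^4}$ in a box $|u| \ll \min(X^\alpha, w^2 X^{1/6})$, $|v| \ll w^3 X^{1/4}$, and then multiply by $1 + X^{1/3}/w^2$. The paper handles this with the Baier--Browning bound $\mathcal{M}(U,V,q) \ll V^\epsilon UV/q^4$ (valid when $q \ll U^{1/2}$ and $U \ll V^{2/3}$), which the paper emphasizes is strictly stronger than what Weil's bound for the relevant exponential sums yields. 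Even with Baier--Browning, this regime contributes $X^{1/4+\delta} T^2$, and with $T = X^{7/24}$ that sits right at $X^{5/6+\delta}$, so the threshold is exactly what pins the constant to $7/24$. Replacing Baier--Browning by a ``Weil-type equidistribution estimate'' would give a worse bound in this regime, and your claim of $O(X^{3/4+\epsilon})$ would actually be a significant improvement over the paper; it would need a real argument. (In the large regime $|u| > X^{1/2}$ your observation that there is at most one candidate $B$, forced by the congruence $B \equiv (v^2-u^3)w^{-6} \pmod u$, is the same trivial divisor count the paper uses there; no loss on that side.)

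Finally, a minor point: the lower bound $\#\mathcal{F}_{\geq 1}(X) \gg X^{5/6}$ is not a consequence of ``earlier constructions of positive-rank curves''; it is the theorem of Bhargava--Skinner that a positive proportion of curves ordered by height have rank one, and the paper invokes exactly that.
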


In other words, Theorem \ref{Main Theorem} states that among the family of all elliptic curves defined over $\mathbb{Q}$ and having positive rank, there is a density one subfamily of curves which satisfy a strong form of Conjecture \ref{Lang Conjecture}, where the word density is interpreted using the ordering of elliptic curves by height.

The author \cite{MR3455753} has recently studied the analogous problem for families of quadratic twists of a fixed elliptic curve. It is important to note that for these families, the analog of Conjecture \ref{Lang Conjecture} trivially holds (see for instance
\cite[Section $2$.$2$]{MR3455753}). This is the main reason why the problem investigated in the present article is substantially harder.

In addition, it should be stressed that there are many elliptic curves $E_{A,B}$ with rather small $\mu(E_{A,B})$. For instance, the analysis of the case of families of quadratic twists of a fixed elliptic curve given in \cite[Section $2$.$2$]{MR3455753} shows that there are infinitely many elliptic curves $E_{A,B}$ for which
\begin{equation*}
\mu(E_{A,B}) \leq \frac1{48} \log H(E_{A,B}) + O(1).
\end{equation*}
Theorem~\ref{Main Theorem} shows that families of elliptic curves satisfying such an inequality have to be sparse.

Let us now describe the main steps of the proof of Theorem \ref{Main Theorem}. According to the philosophy of Hindry and Silverman \cite{MR948108}, it is natural to try to get rid of the elliptic curves whose Szpiro ratios are not under control. Therefore, we start by showing that we can restrict our attention to a subfamily of elliptic curves which, loosely speaking, have the property that their discriminants are not far from being squarefree.

Then, the second step of the proof consists in checking that for the elliptic curves in this subfamily, the canonical height and the Weil height have comparable size. This is carried out by making use of the decomposition of the canonical height into a sum of N\'{e}ron local heights.

Putting these two first steps together, we can reduce the proof of Theorem \ref{Main Theorem} to the problem of counting the number of integral solutions to the Diophantine equation in five variables
\begin{equation*}
y^2 = w^3 + A w q^4 + B q^6,
\end{equation*}
where $w$, $y$, $q$, $A$ and $B$ are restricted to lie in boxes. This Diophantine equation is then viewed as two congruences modulo $w$ and $q^4$ respectively. Unfortunately, there seems to be no other option than dealing with the first one in a trivial way. Then, the second one is tackled using the work of Baier and Browning \cite{MR3100953}, which gives a nice upper bound for the number of solutions to an inhomogeneous cubic congruence and which is particularly efficient when the modulus of the congruence is divisible by large powers. We note that refining our strategy to deal with this counting problem may lead to an improvement of the constant $7/24$ appearing in
Theorem \ref{Main Theorem}. However, this does not seem to be an easy task.

Eventually, the last step of the proof consists in using the work of Bhargava and Skinner \cite{MR3237733} who established that, when elliptic curves defined over $\mathbb{Q}$ are ordered by height, a positive proportion have rank equal to $1$.

We would like to finish this introduction by providing evidence supporting the fact that a much stronger result than
Theorem~\ref{Main Theorem} should actually hold. To begin with, we note that the analogy with Goldfeld's Conjecture \cite{MR564926} about the average rank in families of quadratic twists of a fixed elliptic curve naturally leads to the expectation that
\begin{equation*}
\lim_{X \to \infty} \frac{\# \{ E_{A,B} \in \mathcal{F}_{\geq 1}(X) : \rank E_{A,B}(\mathbb{Q}) = 1\}}{\# \mathcal{F}_{\geq 1}(X)} = 1.
\end{equation*}
We remark that Goldfeld's Conjecture is supported by the Katz-Sarnak Philosophy (see \cite[Page $15$]{MR1659828}) about zeros of $L$-functions and also by Random Matrix Theory heuristics (see for instance \cite[Conjecture $1$]{MR1956231}). As a result, we can restrict our attention to elliptic curves $E_{A,B} \in \mathcal{F}_{\geq 1}(X)$ which have rank equal to $1$.

Let $\Sha(E_{A,B})$ and $\Omega(E_{A,B})$ respectively denote the Tate-Shafarevich group and the real period of an elliptic curve
$E_{A,B} \in \mathcal{F}_{\geq 1}(X)$. We recall that the Birch and Swinnerton-Dyer Conjecture predicts that the central value of the derivative of the Hasse-Weil $L$-function of a curve $E_{A,B} \in \mathcal{F}_{\geq 1}(X)$ with rank $1$ should essentially be equal to
\begin{equation*}
\Omega(E_{A,B}) \# \Sha(E_{A,B}) \mu(E_{A,B}).
\end{equation*}
Moreover, it is not hard to check that $\Omega(E_{A,B})$ has size about $H(E_{A,B})^{-1/12}$ (see for instance \cite[Section $1$]{MR717593}).

Then, we note that for families of quadratic twists of a fixed elliptic curve, an asymptotic formula for the average of the central values of the derivatives of the Hasse-Weil $L$-functions is known (see \cite[Theorem]{MR1081731} and \cite[Theorem $1$]{MR1109350}). Putting all these remarks together, we may conjecture that there exists $C > 0$ such that
\begin{equation*}
\frac1{\# \mathcal{F}_{\geq 1}(X)} \sum_{\substack{E_{A,B} \in \mathcal{F}_{\geq 1}(X) \\ \rank E_{A,B}(\mathbb{Q}) = 1}}
\# \Sha(E_{A,B}) \mu(E_{A,B}) \sim C X^{1/12} \log X.
\end{equation*}

Finally, the heuristics given by Delaunay \cite{MR1837670} and Bhargava, Kane, Lenstra, Poonen and Rains \cite{MR3393023} use different methods but both lead to the prediction that $\# \Sha(E_{A,B})$ should be small very often. We are thus led to make the following conjecture.

\begin{conjecture}
\label{Main Conjecture}
Let $\varepsilon > 0$ be fixed. We have the equality
\begin{equation*}
\lim_{X \to \infty}
\frac{\# \{ E_{A,B} \in \mathcal{F}_{\geq 1}(X) : \mu(E_{A,B}) > H(E_{A,B})^{1/12 - \varepsilon} \}}{\# \mathcal{F}_{\geq 1}(X)} = 1.
\end{equation*}
\end{conjecture}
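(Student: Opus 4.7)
The plan is to elaborate the heuristic already sketched in the introduction by turning each of its conjectural inputs into an explicit reduction. First I would invoke a Goldfeld-type statement for the universal family, namely that among $\mathcal{F}_{\geq 1}(X)$ the subfamily with $\rank E_{A,B}(\mathbb{Q}) = 1$ has relative density tending to $1$ as $X \to \infty$. This is expected from the Katz-Sarnak philosophy and from random matrix heuristics, but is currently open in this generality and would enter the argument as a working hypothesis.

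Having reduced to rank one, one invokes the strong form of the Birch and Swinnerton-Dyer conjecture,
\begin{equation*}
L'(E_{A,B},1) \;=\; \frac{\Omega(E_{A,B})\, c_{E_{A,B}}\, \#\Sha(E_{A,B})\, \mu(E_{A,B})}{(\#E_{A,B}(\mathbb{Q})_{\tors})^{2}},
\end{equation*}
where $c_{E_{A,B}}$ denotes the product of local Tamagawa numbers, together with the standard estimate $\Omega(E_{A,B}) \asymp H(E_{A,B})^{-1/12}$ on the real period. Rearranging yields
\begin{equation*}
\mu(E_{A,B}) \;\asymp\; H(E_{A,B})^{1/12} \cdot \frac{L'(E_{A,B},1)\,(\#E_{A,B}(\mathbb{Q})_{\tors})^{2}}{c_{E_{A,B}}\, \#\Sha(E_{A,B})},
\end{equation*}
so the target bound $\mu(E_{A,B}) > H(E_{A,B})^{1/12-\varepsilon}$ is equivalent to showing that, outside a density-zero subset of $\mathcal{F}_{\geq 1}(X)$, the quantity $L'(E_{A,B},1)$ is not abnormally small and the product $\#\Sha(E_{A,B})\, c_{E_{A,B}}$ is not abnormally large on the scale $H(E_{A,B})^{\varepsilon}$.

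The third step would attack these two averaging problems. For the numerator, I would aim for a small positive-moment asymptotic for $L'(E_{A,B},1)$ over $\mathcal{F}_{\geq 1}(X)$, extending to the universal family the kind of first- and second-moment results available for twist families; a Chebyshev-type inequality applied to such a moment would then deliver the required density-one lower bound. For the denominator, the heuristics of Delaunay and of Bhargava-Kane-Lenstra-Poonen-Rains predict $\#\Sha(E_{A,B}) \leq H(E_{A,B})^{\varepsilon}$ outside a set of density zero, which one would hope to upgrade to a theorem via Selmer-group averaging in the spirit of the Bhargava-Shankar works cited in the introduction. The Tamagawa product is $H(E_{A,B})^{o(1)}$ away from a thin set of curves with highly non-squarefree conductor, and this exceptional set can be controlled by the same near-squarefree discriminant reduction already used in the proof of Theorem \ref{Main Theorem}.

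The main obstacle is obtaining any unconditional handle on the joint distribution of $L'(E_{A,B},1)$ and $\#\Sha(E_{A,B})$: individual moments of either quantity over the universal family remain far out of reach, and a genuine proof appears to require simultaneous progress on an averaged form of BSD and on Delaunay-style upper bounds for $\Sha$, both of which are themselves deeply conjectural. A more modest intermediate target, accessible with the techniques of the present article alone, would be to sharpen the Baier-Browning input to the inhomogeneous cubic congruence so as to replace the constant $7/24$ in Theorem \ref{Main Theorem} by something larger, thereby narrowing the gap with the conjectural exponent $1/12$.
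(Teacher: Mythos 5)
The item you are addressing is Conjecture~\ref{Main Conjecture}, which the paper states as a conjecture and does not prove; the only thing the paper offers is the heuristic sketch in the introduction. Your proposal is a faithful and somewhat more detailed rendering of that same heuristic: you reduce to rank one via a Goldfeld-type density statement for the universal family, invoke BSD together with $\Omega(E_{A,B}) \asymp H(E_{A,B})^{-1/12}$, and then argue that the remaining arithmetic factors ($L'$, $\#\Sha$, Tamagawa numbers, torsion) are of size $H(E_{A,B})^{o(1)}$ outside a density-zero set. The paper suppresses the Tamagawa and torsion factors by writing that the central derivative should ``essentially'' equal $\Omega \cdot \#\Sha \cdot \mu$; your version makes these explicit, which is an improvement in precision but not a change of strategy. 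You are also correct that every ingredient is currently out of reach over the universal family, so neither you nor the paper produces anything resembling a proof, and the paper does not claim to.

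One point in your outline deserves a caveat. You propose to rule out abnormally small values of $L'(E_{A,B},1)$ via a Chebyshev inequality applied to a positive moment. A first or second moment asymptotic for $L'$ controls the frequency of large values, not small ones: it gives no lower bound on $L'$ for a density-one subfamily. What you actually need is a statement about the value distribution near zero, for instance a negative-moment bound, a non-vanishing result combined with a repulsion estimate, or a one-level-density input strong enough to control small central derivatives. This distinction does not change the conjectural status of the whole argument, but as written the Chebyshev step would not do what you want even if the moment asymptotics were available. Your closing remark, that the realistic unconditional target is to enlarge the constant $7/24$ by sharpening the Baier--Browning input, is aligned with the paper's own comments after Lemma~\ref{Key Lemma}.
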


We remark that Conjecture \ref{Main Conjecture} is the analog of \cite[Conjecture $1$]{MR3455753} which deals with families of quadratic twists of a fixed elliptic curve.

It is worth noting that it could be fair to attribute Conjecture~\ref{Main Conjecture} to Lang since this general philosophy was implicit in his investigation \cite[Section $1$]{MR717593}. More precisely, Conjecture~\ref{Main Conjecture} essentially states that the conjectural upper bound for $\mu(E_{A,B})$ stated in \cite[Conjecture~$3$]{MR717593} is almost optimal for most elliptic curves. However, following the referee's advice based on the fact that Conjecture~\ref{Main Conjecture} does not explicitly appear in the works of Lang, the author has not directly attributed it to him.

\subsection*{Acknowledgements}

It is a great pleasure for the author to thank Philippe Michel and Ramon Moreira Nunes for interesting conversations related to the topics of this article. In addition, the author is grateful to Fabien Pazuki and Joe Silverman for making useful comments on an earlier version of this article. Finally, the author would like to thank the referee for his or her careful work.

This work was initiated while the author was working as an Instructor at the \'{E}cole Polytechnique F\'{e}d\'{e}rale de Lausanne. The financial support and the wonderful working conditions that the author enjoyed during the four years he worked at this institution are gratefully acknowledged.

The research of the author is integrally funded by the Swiss National Science Foundation through the SNSF Professorship number $170565$ awarded to the project \textit{Height of rational points on algebraic varieties}. Both the financial support of the SNSF and the perfect working conditions provided by the University of Basel are gratefully acknowledged.

\section{Preliminaries}

\subsection{Restricting to a subfamily of the family of all elliptic curves}

We start by remarking that an elementary sieving argument (see for instance \cite[Lemma~$4$.$3$]{MR1176198}) shows that
\begin{equation}
\label{Size F}
\# \mathcal{F}(X) \sim \frac{2^{4/3}}{3^{3/2} \zeta(10)} X^{5/6},
\end{equation}
where $\zeta$ denotes the Riemann zeta function.

In addition, we define the discriminant of an elliptic curve $E_{A,B} \in \mathcal{F}$ by
\begin{equation*}
\Delta(E_{A,B}) = -16 (4 A^3 + 27 B^2).
\end{equation*}
For $n \in \mathbb{Z}_{\neq 0}$, there is a unique way to write $n = m \ell^2$ with $\ell \geq 1$ and $m$ squarefree. We denote by
$\sq(n)$ the only such integer $\ell$. For $\delta \in (0,1)$, we let
\begin{equation}
\label{Definition F_delta}
\mathcal{F}_{\delta}(X) = \left\{ E_{A,B} \in \mathcal{F}(X) :
\begin{array}{l}
|\Delta(E_{A,B})| > X^{1 - \delta} \\
\sq(\Delta(E_{A,B})) \leq 4 X^{\delta}
\end{array}
\right\}.
\end{equation}

The following lemma states that for any fixed $\delta \in (0, 1/6)$, the cardinality of the subset $\mathcal{F}_{\delta}(X)$ is asymptotically as large as the cardinality of the total set $\mathcal{F}(X)$.

\begin{lemma}
\label{Size F delta}
Let $\delta \in (0, 1/6)$ be fixed. We have the estimate
\begin{equation*}
\# \mathcal{F}_{\delta}(X) = \# \mathcal{F}(X) \left( 1 + O \left( \frac1{X^{5\delta/6}} \right) \right).
\end{equation*}
\end{lemma}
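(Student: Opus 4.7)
The plan is to cover $\mathcal{F}(X) \smallsetminus \mathcal{F}_{\delta}(X) \subseteq \mathcal{B}_1 \cup \mathcal{B}_2$, where
\[
\mathcal{B}_1 = \{E_{A,B} \in \mathcal{F}(X) : |\Delta(E_{A,B})| \leq X^{1-\delta}\}, \quad
\mathcal{B}_2 = \{E_{A,B} \in \mathcal{F}(X) : \sq(\Delta(E_{A,B})) > 4X^{\delta}\},
\]
and to establish the bounds $\#\mathcal{B}_i \ll X^{5/6-5\delta/6}$ for $i=1,2$. Combined with the asymptotic \eqref{Size F}, this will yield the conclusion.

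For $\mathcal{B}_1$, the inequality $|4A^3 + 27B^2| \leq X^{1-\delta}/16$ confines $(A,B)$ to a thin tubular neighbourhood of the cuspidal cubic $\{4A^3 + 27B^2 = 0\}$. I would proceed by fixing $A$ and estimating the length of the admissible range in $B$: for $|A|^3 \leq X^{1-\delta}$ one finds $|B| \ll X^{(1-\delta)/2}$, which after summation over $A$ contributes $O(X^{5(1-\delta)/6})$; and for $|A|^3 > X^{1-\delta}$ the admissible range around $\pm\sqrt{-4A^3/27}$ has length $O(X^{1-\delta}/|A|^{3/2})$, which upon summation gives $\sum_{|A| > X^{(1-\delta)/3}} X^{1-\delta}/|A|^{3/2} \ll X^{5(1-\delta)/6}$. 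Hence $\#\mathcal{B}_1 \ll X^{5(1-\delta)/6}$, as desired.

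For $\mathcal{B}_2$, the condition $\sq(\Delta) > 4X^\delta$ produces an integer $m$ with $4X^\delta < m \ll X^{1/2}$ such that $m^2 \mid 4A^3 + 27B^2$, whence
\[
\#\mathcal{B}_2 \leq \sum_{4X^\delta < m \ll X^{1/2}} N(m), \qquad N(m) := \#\{(A,B) \in \mathrm{box} : m^2 \mid 4A^3 + 27B^2\}.
\]
The cuspidal parametrization $(A,B) \equiv (-3t^2, 2t^3) \pmod{m^2}$, together with the singular locus $(A,B) \equiv (0,0) \pmod{m}$, yields $O(m^{2+\epsilon})$ admissible residue classes modulo $m^2$, and a density argument in principle delivers $N(m) \ll X^{5/6+\epsilon}/m^2$ plus boundary errors. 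After summation over $m$, this gives a main term of order $X^{5/6-\delta+\epsilon}$, comfortably within the target $X^{5/6-5\delta/6}$ since $\delta > 5\delta/6$.

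The main technical obstacle lies in controlling the boundary terms in $N(m)$ uniformly across the entire range $4X^\delta < m \ll X^{1/2}$, which becomes delicate once $m$ exceeds $X^{1/4}$ and the residue classes modulo $m^2$ no longer fit inside the box. In that large range I would instead exploit the observation that $m^2 \mid \Delta$ together with $|\Delta| \leq 32X$ constrains $|4A^3 + 27B^2|$ to size $O(X/m^2)$, reducing the count to an instance of the very thin-region estimate already used for $\mathcal{B}_1$. A careful split of the sum at an intermediate cutoff, in the spirit of the Baier--Browning treatment of inhomogeneous cubic congruences alluded to in the introduction, then handles the transition between the two regimes and in fact pins down the permissible range $\delta < 1/6$ in the statement.
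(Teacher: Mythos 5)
Your bound for $\mathcal{B}_1$ coincides with the paper's argument: the same split at $|A|^3 \leq X^{1-\delta}$, the same Taylor expansion of $B$ around $\pm\sqrt{-4A^3/27}$ for large $|A|$, and the same resulting bound $O(X^{5/6-5\delta/6})$.

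The treatment of $\mathcal{B}_2$, however, has a genuine gap, which you yourself flag. The density heuristic $N(m) \ll X^{5/6+\epsilon}/m^2$ only holds once $m^2$ is small compared to both sides of the box, i.e., $m \ll X^{1/6}$. In the complementary range the honest bound is
\begin{equation*}
N(m) \ll m^{2+\epsilon}\left(\frac{X^{1/3}}{m^2}+1\right)\left(\frac{X^{1/2}}{m^2}+1\right),
\end{equation*}
whose boundary terms $X^{1/3+\epsilon}$, $X^{1/2+\epsilon}$ and $m^{2+\epsilon}$, summed over $m$ up to $X^{1/2}$, are of size at least $X^{5/6+\epsilon}$, overwhelming the target $X^{5/6-5\delta/6}$. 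Your proposed rescue for this regime does not hold up: $m^2 \mid 4A^3+27B^2$ together with $|4A^3+27B^2| \leq 2X$ only says the quotient $k=(4A^3+27B^2)/m^2$ satisfies $|k| \ll X/m^2$; it does \emph{not} force $|4A^3+27B^2|$ to be small, so there is no thin neighbourhood of the cuspidal cubic to fall back on. For each fixed $k\neq 0$ one faces the Mordell-type curve $4A^3+27B^2=m^2k$, and counting its integral points in a box uniformly in $m^2k$ is a problem of an entirely different nature, not a variant of the $\mathcal{B}_1$ estimate.

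The paper resolves the two regimes by quite different means. For $\ell>X^{1/3}$ (your $m$ large) it fixes $A$ and the squarefree part $m'$ of $4A^3+27B^2$, both $\ll X^{1/3}$, and observes that $27B^2-m'\ell^2=-4A^3$ represents a fixed nonzero integer by a binary quadratic form; a classical theorem of Estermann then gives $\ll X^{\varepsilon}$ solutions $(B,\ell)$, so this whole range contributes $\ll X^{2/3+\varepsilon}$. For $X^{\delta}<\ell\leq X^{1/3}$, the paper extracts $d=\gcd(B,\ell)$, notes that $d^2\mid 4A^3$ and $\gcd(4A^3/d^2,(\ell/d)^2)\mid 27$, and thereby reduces to a one-variable quadratic congruence $27b^2\equiv\alpha\pmod{n^2}$ with bounded ramification, for which $\ll n^{\varepsilon}\bigl(X^{1/2}/(dn^2)+1\bigr)$ solutions are available. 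It is this gcd reduction, turning the cubic congruence into a clean quadratic one, and the Estermann input for the long range, that your residue-class count and density heuristic do not replicate; without some replacement for them the proof of the $\mathcal{B}_2$ bound does not close.
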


\begin{proof}
Recalling the asymptotic formula \eqref{Size F}, we see that our aim is to establish the upper bound
\begin{equation*}
\# \mathcal{F}(X) - \# \mathcal{F}_{\delta}(X) \ll X^{5/6 - 5\delta/6}.
\end{equation*}
In order to do so, we check that we have
\begin{equation}
\label{First}
\# \{ E_{A,B} \in \mathcal{F}(X) : |\Delta(E_{A,B})| \leq X^{1 - \delta} \} \ll X^{5/6 - 5\delta/6},
\end{equation}
and
\begin{equation}
\label{Second}
\# \{ E_{A,B} \in \mathcal{F}(X) : \sq(\Delta(E_{A,B})) > 4 X^{\delta} \} \ll X^{5/6 - 5\delta/6}.
\end{equation}

We start by proving the upper bound \eqref{First}. We have
\begin{equation*}
\# \{ E_{A,B} \in \mathcal{F}(X) : |\Delta(E_{A,B})| \leq X^{1 - \delta} \} \leq
\# \left\{(A,B) \in \mathbb{Z}^2 : \! \! \!
\begin{array}{l}
|A| \leq X^{1/3} \\
|4 A^3 + 27 B^2| \leq X^{1 - \delta}
\end{array}
\right\}.
\end{equation*}
First, it is clear that
\begin{equation}
\label{Upper bound 1}
\# \left\{(A,B) \in \mathbb{Z}^2 :
\begin{array}{l}
|A| \leq X^{1/3 - \delta/3} \\
|4 A^3 + 27 B^2| \leq X^{1 - \delta}
\end{array}
\right\} \ll X^{5/6 - 5\delta/6}.
\end{equation}
In addition, if $|A| > X^{1/3 - \delta/3}$ then the estimate
\begin{equation*}
B^2 = - \frac{4}{27} A^3 \left( 1 + O \left( \frac{X^{1 - \delta}}{|A|^3} \right) \right)
\end{equation*}
implies that
\begin{equation*}
|B| = \frac{2}{27^{1/2}} |A|^{3/2} + O \left( \frac{X^{1 - \delta}}{|A|^{3/2}} \right).
\end{equation*}
Therefore, we get
\begin{equation*}
\# \left\{(A,B) \in \mathbb{Z}^2 :
\begin{array}{l}
X^{1/3 - \delta/3} < |A| \leq X^{1/3} \\
|4 A^3 + 27 B^2| \leq X^{1 - \delta}
\end{array}
\right\} \ll \sum_{X^{1/3 - \delta/3} < |A| \leq X^{1/3}} \left( \frac{X^{1 - \delta}}{|A|^{3/2}} + 1 \right),
\end{equation*}
which eventually gives
\begin{equation}
\label{Upper bound 2}
\# \left\{(A,B) \in \mathbb{Z}^2 :
\begin{array}{l}
X^{1/3 - \delta/3} < |A| \leq X^{1/3} \\
|4 A^3 + 27 B^2| \leq X^{1 - \delta}
\end{array}
\right\} \ll X^{5/6 - 5\delta/6}.
\end{equation}
The upper bound \eqref{First} directly follows from the upper bounds \eqref{Upper bound 1} and \eqref{Upper bound 2}.

We now prove the upper bound \eqref{Second}. First, we note that
\begin{equation}
\label{zero}
\# \{ E_{A,B} \in \mathcal{F}(X) : AB=0 \} \ll X^{1/2}.
\end{equation}
We introduce the set
\begin{equation*}
\mathcal{E}_{\delta}(X) = \left\{(A,B,m,\ell) \in \mathbb{Z}_{\neq 0}^3 \times \mathbb{Z}_{\geq 1} :
\begin{array}{l}
|A|^3, B^2 \leq X  \\
\ell > X^{\delta} \\
4 A^3 + 27 B^2 = m \ell^2
\end{array}
\right\}.
\end{equation*}
The upper bound \eqref{zero} shows that
\begin{equation*}
\# \{ E_{A,B} \in \mathcal{F}(X) : \sq(\Delta(E_{A,B})) > 4 X^{\delta} \} \ll \# \mathcal{E}_{\delta}(X) + X^{1/2}.
\end{equation*}
As a result, in order to establish the upper bound \eqref{Second}, it suffices to check that we have
\begin{equation}
\label{Second'}
\# \mathcal{E}_{\delta}(X) \ll X^{5/6 - 5\delta/6}.
\end{equation}
We define
\begin{equation*}
\mathcal{E}_{\delta}^{(1)}(X) = \{(A,B,m,\ell) \in \mathcal{E}_{\delta}(X) : \ell > X^{1/3} \},
\end{equation*}
and $\mathcal{E}_{\delta}^{(2)}(X) = \mathcal{E}_{\delta}(X) \smallsetminus \mathcal{E}_{\delta}^{(1)}(X)$, so that
\begin{equation}
\label{E}
\# \mathcal{E}_{\delta}(X) = \# \mathcal{E}_{\delta}^{(1)}(X) + \# \mathcal{E}_{\delta}^{(2)}(X).
\end{equation}
We start by proving an upper bound for the cardinality of the set $\mathcal{E}_{\delta}^{(1)}(X)$. We note that we have $m \ell^2 \ll X$. Therefore, the condition $\ell > X^{1/3}$ implies that $m \ll X^{1/3}$. Since $A m \neq 0$ and $A, m \ll X^{1/3}$, it follows from the work of Estermann \cite{MR1512732} that we have
\begin{equation*}
\# \left\{(B,\ell) \in \mathbb{Z}_{\neq 0} \times \mathbb{Z}_{\geq 1} :
\begin{array}{l}
B, \ell \ll X^{1/2}  \\
4 A^3 + 27 B^2 = m \ell^2
\end{array}
\right\} \ll X^{\varepsilon},
\end{equation*}
for any fixed $\varepsilon > 0$, which finally gives
\begin{equation}
\label{E1}
\# \mathcal{E}_{\delta}^{(1)}(X) \ll X^{2/3 + \varepsilon}.
\end{equation}
We now prove an upper bound for the cardinality of the set $\mathcal{E}_{\delta}^{(2)}(X)$. To do so, we write
\begin{equation*}
\# \mathcal{E}_{\delta}^{(2)}(X) \leq \# \left\{(A,B,\ell) \in \mathbb{Z}_{\neq 0}^2 \times \mathbb{Z}_{\geq 1} :
\begin{array}{l}
|A|^3, B^2 \leq X \\
X^{\delta} < \ell \leq X^{1/3} \\
27 B^2 = -4 A^3 \imod{\ell^2}
\end{array}
\right\}.
\end{equation*}
Setting $d=\gcd(B,\ell)$, we see that $d^2 \mid 4 A^3$ and $\gcd(4A^3/d^2, \ell^2/d^2) \mid 27$. We thus have
\begin{equation*}
\# \mathcal{E}_{\delta}^{(2)}(X) \ll \sum_{1 \leq |A| \leq X^{1/3}} \ \sum_{d^2 \mid 4 A^3}
\sum_{\substack{X^{\delta} < d n \leq X^{1/3} \\ \gcd(4A^3/d^2, n^2) \mid 27}}
\# \left\{b \in \mathbb{Z}_{\neq 0} :
\begin{array}{l}
|b| \leq X^{1/2}/d \\
27 b^2 = \alpha \imod{n^2}
\end{array}
\right\},
\end{equation*}
where we have set $\alpha = -4A^3/d^2$. Since $\gcd(\alpha,n^2)$ is bounded by an absolute constant, we can make use of the classical upper bound
\begin{equation*}
\# \left\{b \in \mathbb{Z}_{\neq 0} :
\begin{array}{l}
|b| \leq X^{1/2}/ d \\
27 b^2 = \alpha \imod{n^2}
\end{array}
\right\} \ll n^{\varepsilon} \left( \frac{X^{1/2}}{d n^2} + 1 \right),
\end{equation*}
for any fixed $\varepsilon > 0$. We thus obtain
\begin{equation*}
\# \mathcal{E}_{\delta}^{(2)}(X) \ll X^{\varepsilon} \sum_{1 \leq |A| \leq X^{1/3}} \ \sum_{d^2 \mid 4A^3} 
\left( X^{1/2-\delta} + X^{1/3} \right).
\end{equation*}
Using the divisor bound, we get
\begin{equation}
\label{E2}
\# \mathcal{E}_{\delta}^{(2)}(X) \ll  X^{5/6 - \delta + \varepsilon} + X^{2/3+\varepsilon}.
\end{equation}
As a result, putting together the equality \eqref{E} and the upper bounds \eqref{E1} and \eqref{E2}, we eventually deduce that
\begin{equation*}
\# \mathcal{E}_{\delta}(X) \ll  X^{5/6 - \delta + \varepsilon} + X^{2/3+\varepsilon}.
\end{equation*}
Using the fact that $\delta < 1/6$ and choosing $\varepsilon = \delta/6$ we obtain the upper bound \eqref{Second'}, which completes the proof.
\end{proof}

\subsection{On the difference between the Weil height and the canonical height}

\label{Subsection heights}

We start by recalling the definitions of several height functions (see for instance \cite[Chapter VIII]{MR2514094} for more details). We let
$h : \mathbb{P}^1(\mathbb{Q}) \to \mathbb{R}_{\geq 0}$ be the logarithmic Weil height and we let
$h_x : \mathbb{P}^2(\mathbb{Q}) \to \mathbb{R}_{\geq 0}$ be defined by
\begin{equation*}
h_x(x:y:z) = h(x:z),
\end{equation*}
if $(x:y:z) \neq (0:1:0)$ and $h_x(0:1:0) = 0$. We also recall that the canonical height
$\hat{h}_E : E(\mathbb{Q}) \to \mathbb{R}_{\geq 0}$ on an elliptic curve $E$ is defined by setting
\begin{equation*}
\hat{h}_E(P) = \frac1{2} \lim_{n \to \infty} \frac1{4^n} h_x(2^n P),
\end{equation*}
for $P \in E(\mathbb{Q})$, and where $2^n P$ denotes the point obtained by adding the point $P$ to itself $2^n$ times using the group law of $E$.

In addition, we need to introduce the $j$-invariant of an elliptic curve. For $E_{A,B} \in \mathcal{F}$, we thus define
\begin{equation}
\label{j-invariant}
j(E_{A,B}) = - 1728 \frac{(4 A)^3}{\Delta(E_{A,B})}.
\end{equation}

The following result states that if $\delta \in (0,1)$ is fixed then for any $E_{A,B} \in \mathcal{F}_{\delta}(X)$, the height $h_x/2$ is essentially bounded above by the canonical height $\hat{h}_{E_{A,B}}$. We note that Alpoge \cite[Section $4$.$2$]{Alpoge} has recently made similar remarks.

\begin{lemma}
\label{Difference}
Let $\delta \in (0,1)$ be fixed. For any $E_{A,B} \in \mathcal{F}_{\delta}(X)$, we have the lower bound
\begin{equation*}
\hat{h}_{E_{A,B}} - \frac1{2} h_x \geq - \frac{\delta}{2} \log X - 3.
\end{equation*}
\end{lemma}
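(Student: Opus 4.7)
The approach is to use the Néron local height decomposition
\begin{equation*}
\hat{h}_{E_{A,B}}(P) - \frac{1}{2} h_x(P) = \sum_{v} \left( \hat{\lambda}_v(P) - \frac{1}{2} \log \max(1, |x(P)|_v) \right),
\end{equation*}
where $v$ runs over all places of $\mathbb{Q}$ and $\hat{\lambda}_v$ is the Néron local canonical height, and to bound each summand from below separately. For the archimedean place, standard explicit estimates, obtained via the $\sigma$-function representation of $\hat{\lambda}_\infty$ on $\mathbb{C}/\Lambda_{E_{A,B}}$, yield an absolute lower bound $\hat{\lambda}_\infty(P) - \frac{1}{2} \log \max(1, |x(P)|_\infty) \geq -c_\infty$ for some explicit constant $c_\infty$.

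For the non-archimedean places, I would first use the fact that the defining condition of $\mathcal{F}$ forces the short Weierstrass equation $y^2 = x^3 + Ax + B$ to be minimal at every prime $p \geq 5$; the primes $p \in \{2,3\}$ contribute only an absolute additive constant and can be treated separately. At a minimal prime of good reduction the local term vanishes, and at a minimal prime of multiplicative reduction of type $I_1$ (equivalently $v_p(\Delta(E_{A,B})) = 1$) the group of components of the Néron model is trivial, so again the local term vanishes. At the remaining bad primes, which necessarily satisfy $v_p(\Delta(E_{A,B})) \geq 2$ and hence $p \mid \sq(\Delta(E_{A,B}))$, Tate's explicit formulas for the Néron local height at each Kodaira type yield a lower bound of the form $\hat{\lambda}_p(P) - \frac{1}{2} \log \max(1, |x(P)|_p) \geq -\gamma \, v_p(\Delta(E_{A,B})) \log p$ with a small explicit constant $\gamma$, independent of the reduction type.

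Summing these local bounds over all primes and applying the elementary inequality $v_p(\Delta) \leq 3 v_p(\sq(\Delta))$, which holds whenever $v_p(\Delta) \geq 2$, the total finite-prime contribution is bounded below by $-3\gamma \log \sq(\Delta(E_{A,B}))$, giving
\begin{equation*}
\hat{h}_{E_{A,B}}(P) - \frac{1}{2} h_x(P) \geq -3\gamma \log \sq(\Delta(E_{A,B})) - c_\infty - c_{2,3}.
\end{equation*}
The defining condition $\sq(\Delta(E_{A,B})) \leq 4 X^\delta$ of $\mathcal{F}_\delta(X)$ then converts this into a bound of the form $-3\gamma \delta \log X - \left( 3 \gamma \log 4 + c_\infty + c_{2,3} \right)$, which has the shape asserted in the lemma provided $\gamma \leq 1/6$ (ensuring the coefficient of $\delta \log X$ is at most $1/2$) and the absolute constants sum to at most $3$.

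The main obstacle is the explicit verification of all these constants. The archimedean contribution $c_\infty$ must be controlled sharply enough, and the behaviour at $p \in \{2,3\}$ requires separate attention, since the short Weierstrass model need not be minimal there and the local height at each Kodaira type must be checked individually. The bookkeeping is routine but delicate because the final additive constant $3$ leaves only a modest amount of slack.
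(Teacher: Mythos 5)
Your decomposition into N\'{e}ron local heights matches the paper's, but two of your local estimates are incorrect, and the two errors are structurally entangled. First, the claim that the archimedean term $\lambda_\infty(P) - \tfrac{1}{2}\log^+ |x(P)|$ is bounded below by an absolute constant $-c_\infty$ is false. The correct (and essentially sharp) lower bound, which the paper takes from Silverman, has the form $-\tfrac{1}{12}\log|\Delta(E_{A,B})| - \tfrac{1}{8}\log^+|j(E_{A,B})| - 1$; the $-\tfrac{1}{12}\log|\Delta|$ piece is not absolutely bounded, and for $E_{A,B} \in \mathcal{F}_\delta(X)$ (so $|\Delta| > X^{1-\delta}$) it is of size roughly $-\tfrac{1}{12}\log X$. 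This is visible already in the $\sigma$-function expression for $\lambda_\infty$, which carries a factor of $\Delta^{1/12}$. Second, the claim that the local term vanishes at a prime $p$ of Kodaira type $I_1$ is also false. The triviality of the component group only kills the component-dependent correction; there remains a contribution from the Tate-parameter part, and since $v_p(j) = -1$ at such $p$ (as $v_p(\Delta)=1$ and $p \nmid 6A$), Silverman's estimate gives $\lambda_p(P) - \tfrac{1}{2}\log^+|x(P)|_p \geq \tfrac{1}{12}\log^+|j|_p = \tfrac{1}{12}\log p > 0$.

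The paper's proof hinges precisely on the cancellation between these two effects: the positive contribution $\tfrac{1}{12}\sum_{v_p(\Delta)=1}\log p$, which is $\geq \tfrac{1}{12}\log|\Delta| - O(\delta\log X)$ because $\prod_{v_p(\Delta)\geq 2} p^{v_p(\Delta)} \leq \sq(\Delta)^3 \ll X^{3\delta}$, offsets the $-\tfrac{1}{12}\log|\Delta|$ from the archimedean place, so that only the $O(\delta \log X)$ pieces and a bounded constant survive. Your version of the argument discards both the negative archimedean $\log|\Delta|$ term and the positive type-$I_1$ contribution; taken at face value it would prove that the archimedean contribution is absolutely bounded, which is not true, and without the compensating positivity from the $I_1$ primes the archimedean term alone is off by about $\tfrac{1}{12}\log X$. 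This cancellation is the essential structural point of the lemma that your proposal is missing.
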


\begin{proof}
The inequality clearly holds at the point at infinity. Therefore, all along the proof, we let $P \in E_{A,B}(\mathbb{Q})$ be distinct from the point at infinity. We also let $x(P)$ denote the only rational number such that the coordinates of $P$ in $\mathbb{P}^2(\mathbb{Q})$ can be written as $(x(P):y(P):1)$ for some $y(P) \in \mathbb{Q}$.

Let $\lambda_{\infty}$ and $\lambda_p$, for any prime number $p$, denote the N\'{e}ron local heights relative to the point at infinity (see \cite[Chapter VI, Theorem $1$.$1$]{MR1312368}). Recall that the canonical height $\hat{h}_{E_{A,B}}$ can be decomposed into the sum of these local heights (see \cite[Chapter VI, Theorem $2$.$1$]{MR1312368}). We thus have
\begin{equation*}
\hat{h}_{E_{A,B}}(P) = \sum_p \lambda_p(P) + \lambda_{\infty}(P).
\end{equation*}

We now introduce some notation. For any prime number $p$ and any $n \in \mathbb{Z}_{\neq 0}$, we let $v_p(n)$ denote the $p$-adic valuation of $n$ and we define as usual
\begin{equation*}
|n|_p = \frac1{p^{v_p(n)}}.
\end{equation*}
In order to extend this definition to $\mathbb{Q}$, we let $|0|_p = 0$ and, for any $(r, s) \in \mathbb{Z} \times \mathbb{Z}_{\neq 0}$, we set
\begin{equation*}
\left| \frac{r}{s} \right|_p = \frac{|r|_p}{|s|_p}.
\end{equation*}
Finally, for $u \in \mathbb{R}$, it is convenient to set $\log^+ u = \log \max \{ 1, u \}$, so that for any $t \in \mathbb{Q}$, we have
\begin{equation*}
h(t:1) = \sum_p \log^+ | t |_p + \log^+ |t|.
\end{equation*}
This equality and \cite[Chapter VI, Theorem $4$.$1$]{MR1312368} give
\begin{equation*}
\hat{h}_{E_{A,B}}(P) - \frac1{2} h_x(P) = \sum_{p \mid \Delta(E_{A,B})} \left( \lambda_p(P) - \frac1{2} \log^+ | x(P) |_p  \right) +
\lambda_{\infty}(P) - \frac1{2} \log^+ | x(P)|.
\end{equation*}
Therefore, we are led to establishing lower bounds for the differences between the local heights. We start with the finite places and we distinguish two cases depending on whether $v_p(\Delta(E_{A,B})) \geq 2$ or $v_p(\Delta(E_{A,B})) = 1$.

For the prime numbers $p$ for which $v_p(\Delta(E_{A,B})) \geq 2$, we make use of the inequality
\cite[Theorem $4$.$1$, Part $(a)$]{MR1035944} which states that
\begin{equation*}
\lambda_p(P) - \frac1{2} \log^+ | x(P) |_p \geq - \frac1{24} \log^+ |j(E_{A,B})|_p.
\end{equation*}
Since $|j(E_{A,B})|_p \leq p^{v_p(\Delta(E_{A,B}))}$, we have $\log^+ |j(E_{A,B})|_p \leq \log p^{v_p(\Delta(E_{A,B}))}$. As a result, we get
\begin{equation}
\label{Inequality 0}
\sum_{v_p(\Delta(E_{A,B})) \geq 2} \! \left( \lambda_p(P) - \frac1{2} \log^+ | x(P) |_p \right) \geq
- \frac1{24} \sum_{v_p(\Delta(E_{A,B})) \geq 2} \! \log p^{v_p(\Delta(E_{A,B}))}.
\end{equation}
If $v_p(\Delta(E_{A,B}))$ is even then
\begin{equation*}
v_p(\Delta(E_{A,B})) = 2 v_p(\sq(\Delta(E_{A,B}))),
\end{equation*}
and if $v_p(\Delta(E_{A,B}))$ is odd then
\begin{equation*}
v_p(\Delta(E_{A,B})) = 2 v_p(\sq(\Delta(E_{A,B}))) + 1.
\end{equation*}
In both cases, if $v_p(\Delta(E_{A,B})) \geq 2$, we have
\begin{equation*}
v_p(\Delta(E_{A,B})) \leq 3 v_p(\sq(\Delta(E_{A,B}))).
\end{equation*}
Therefore, we get
\begin{equation*}
\prod_{v_p(\Delta(E_{A,B})) \geq 2} p^{v_p(\Delta(E_{A,B}))} \leq \sq(\Delta(E_{A,B}))^3.
\end{equation*}
Since $\sq(\Delta(E_{A,B})) \leq 4 X^{\delta}$, this eventually gives
\begin{equation}
\label{Product}
\prod_{v_p(\Delta(E_{A,B})) \geq 2} p^{v_p(\Delta(E_{A,B}))} \leq 4^3 X^{3 \delta}.
\end{equation}
The inequality \eqref{Inequality 0} thus implies
\begin{equation}
\label{Inequality 1}
\sum_{v_p(\Delta(E_{A,B})) \geq 2} \left( \lambda_p(P) - \frac1{2} \log^+ | x(P) |_p \right) \geq - \frac{\delta}{8} \log X - 0.2.
\end{equation}

For the prime numbers $p$ for which $v_p(\Delta(E_{A,B})) = 1$, we see that we necessarily have $v_p(6A) = 0$. Indeed, if we had $p=2$ or $p=3$ or $p \mid A$ then we would have $v_p(\Delta(E_{A,B})) \geq 2$. Thus, we also have $|j(E_{A,B})|_p = p$ by the definition \eqref{j-invariant} of $j(E_{A,B})$. As a result, we can use the inequality \cite[Theorem $4$.$1$, Part $(b)$]{MR1035944} which states that
\begin{equation*}
\lambda_p(P) - \frac1{2} \log^+ | x(P) |_p \geq \frac1{12} \log^+ |j(E_{A,B})|_p.
\end{equation*}
This immediately gives
\begin{equation*}
\sum_{v_p(\Delta(E_{A,B})) = 1} \left( \lambda_p(P) - \frac1{2} \log^+ | x(P) |_p \right) \geq
\frac1{12} \sum_{v_p(\Delta(E_{A,B})) = 1} \log p.
\end{equation*}
Using the inequality \eqref{Product}, we thus deduce that
\begin{equation}
\label{Inequality 2}
\sum_{v_p(\Delta(E_{A,B})) = 1} \left( \lambda_p(P) - \frac1{2} \log^+ | x(P) |_p \right) \geq
\frac1{12} \log | \Delta(E_{A,B}) | - \frac{\delta}{4} \log X - 0.4.
\end{equation}

Finally, we treat the case of the archimedean place. Using \cite[Theorem~$5$.$5$]{MR1035944}, we get
\begin{equation*}
\lambda_{\infty}(P) - \frac1{2} \log^+ | x(P) | \geq - \frac1{12} \log | \Delta(E_{A,B}) | - \frac1{8} \log^+ |j(E_{A,B})| - 1.
\end{equation*}
Since $|\Delta(E_{A,B})| > X^{1 - \delta}$ and $4 |A|^3 \leq X$ , we have $|j(E_{A,B})| \leq 1728 \cdot 4^2  X^{\delta}$. Therefore, we see that
\begin{equation}
\label{Inequality 3}
\lambda_{\infty}(P) - \frac1{2} \log^+ | x(P) | \geq - \frac1{12} \log | \Delta(E_{A,B}) | - \frac{\delta}{8} \log X - 2.3.
\end{equation}
Adding up the three inequalities \eqref{Inequality 1}, \eqref{Inequality 2} and \eqref{Inequality 3}, we finally deduce that
\begin{equation*}
\hat{h}_{E_{A,B}}(P) - \frac1{2} h_x(P) \geq - \frac{\delta}{2} \log X - 3,
\end{equation*}
which completes the proof.
\end{proof}

\subsection{Weierstrass equations and parametrization of rational points}

We describe here a classical procedure to parametrize rational points on elliptic curves defined by Weierstrass equations. The following result is, for instance, a particular case of \cite[Lemma~$1$]{MR3455753}.

\begin{lemma}
\label{Parametrization}
Let $(A,B) \in \mathbb{Z}^2$ be such that $4 A^3 + 27 B^2 \neq 0$. Let $(x,y,z) \in \mathbb{Z} \times \mathbb{Z}_{\geq 1}^2$ satisfying $\gcd(x,y,z) = 1$ and
\begin{equation*}
y^2 z = x^3 +A x z^2 + B z^3.
\end{equation*}
Then, there is a unique way to write $x = w q$ and $z = q^3$ where $(w, q) \in \mathbb{Z} \times \mathbb{Z}_{\geq 1}$ satisfies the condition $\gcd(w, q) = 1$ and the equation
\begin{equation*}
y^2 = w^3 + A w q^4 + B q^6.
\end{equation*}
\end{lemma}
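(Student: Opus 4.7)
The plan is to set $q = \gcd(|x|, z) \in \mathbb{Z}_{\geq 1}$ and $w = x/q \in \mathbb{Z}$, and to show that the hypotheses force $z = q^3$. Once this is established, the identity $\gcd(w, q) = 1$ follows from $\gcd(w, q^2) = \gcd(x/q, z/q) = 1$, and dividing the Weierstrass equation $y^2 z = x^3 + A x z^2 + B z^3$ by $q^3 = z$ yields $y^2 = w^3 + A w q^4 + B q^6$. Uniqueness is immediate since $z = q^3$ determines $q \geq 1$, which in turn determines $w = x/q$.

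To prove $z = q^3$, I would argue one prime at a time. Fix a prime $p$ and set $a = v_p(x)$, $b = v_p(z)$, $c = v_p(y)$; the coprimality condition $\gcd(x,y,z) = 1$ becomes $\min(a,b,c) = 0$, while the desired identity $z = q^3$ amounts to $b = 3 \min(a, b)$ at every $p$. The argument is a direct comparison of $p$-adic valuations on the two sides of the Weierstrass equation: the left-hand side has valuation $2c + b$, while the three terms on the right have valuations equal to $3a$, at least $a + 2b$, and at least $3b$ respectively.

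The case analysis is then short. If $a = 0$, the first right-hand side term has valuation $0$ while the others have valuation at least $2b$ and $3b$; if $b \geq 1$, the right-hand side has valuation exactly $0$ while the left-hand side has valuation $\geq b \geq 1$, a contradiction, so $b = 0$. If $a \geq 1$ and $b = 0$, then $b = 3\min(a,b) = 0$ trivially. In the remaining case $a \geq 1$ and $b \geq 1$, coprimality forces $c = 0$, so the left-hand side has valuation exactly $b$; if $3a > b$ then all three right-hand side terms exceed $b$, impossible, so $b \geq 3a$; if instead $b > 3a$, then $3a$ is the strict minimum of $\{3a, a+2b, 3b\}$ and the right-hand side has valuation exactly $3a < b$, again impossible. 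Hence $b = 3a = 3 \min(a, b)$. The only mildly delicate point is controlling strict versus non-strict minima of valuations in this last case; the argument succeeds because the $x^3$ term has its valuation known exactly, while the other two terms are only bounded below, which rigidly pins $v_p(z)$ to $3 v_p(x)$ whenever $p$ divides both $x$ and $z$.
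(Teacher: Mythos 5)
Your argument is correct. The paper does not prove this lemma in the text but cites it as a particular case of Lemma~$1$ in the author's earlier paper \cite{MR3455753}, so there is no in-paper proof to compare against; your self-contained prime-by-prime analysis of $p$-adic valuations is the standard route to such parametrizations and each case is handled correctly. The essential rigidity you identify---that $v_p(x^3)=3a$ is known exactly while $v_p(Axz^2)\geq a+2b$ and $v_p(Bz^3)\geq 3b$ are only bounded below---is precisely what pins $b=3a$ in the case $a,b\geq 1$ (where coprimality forces $c=0$), and the other two cases give $b=0$ directly, so $b=3\min(a,b)$ at every prime and $z=q^3$ with $q=\gcd(|x|,z)$. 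The remaining deductions are routine and correct: $\gcd(w,q)\mid\gcd(x/q,z/q)=1$, dividing the Weierstrass equation by $z=q^3$ yields $y^2=w^3+Awq^4+Bq^6$, and $q$ (hence $w$) is uniquely determined by $z$.
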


\subsection{On the number of solutions to an inhomogeneous cubic congruence}

We now state a result which follows directly from the deep work of Baier and Browning \cite{MR3100953}. It provides a nice upper bound for the number of solutions to an inhomogeneous cubic congruence in two variables restricted to lie in a box, in the case where the modulus of the congruence is a fourth power.

For $U, V \geq 1$ and $q \in \mathbb{Z}_{\geq 1}$, we define
\begin{equation}
\label{Definition M}
\mathcal{M}(U,V,q) = \# \left\{ (u,v) \in \mathbb{Z}^2 :
\begin{array}{l}
v^2 = u^3 \imod{q^4} \\
|u| \leq U, |v| \leq V \\
\gcd(uv, q) = 1
\end{array}
\right\}.
\end{equation}
The following lemma is a particular case of the result of Baier and Browning \cite[Theorem $1$.$2$]{MR3100953}.

\begin{lemma}
\label{Baier-Browning}
Let $\varepsilon > 0$ be fixed. For $U, V \geq 1$ and $q \in \mathbb{Z}_{\geq 1}$, we have the upper bound
\begin{equation*}
\mathcal{M}(U,V,q) \ll (U V q)^{\varepsilon}
\left( \frac{UV}{q^4} + \frac{V}{q^2} + \frac{U^{3/4} V^{1/2}}{q^2} + \frac{q^2 U}{V} + \frac{q^2}{U^{1/2}} \right).
\end{equation*}
Therefore, if $q \ll U^{1/2}$ and $U \ll V^{2/3}$ then we have the upper bound
\begin{equation*}
\mathcal{M}(U,V,q) \ll V^{\varepsilon} \frac{UV}{q^4}.
\end{equation*}
\end{lemma}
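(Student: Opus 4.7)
The plan is to derive the first bound as a direct invocation of Theorem~1.2 of Baier--Browning \cite{MR3100953}, applied to the inhomogeneous cubic polynomial $F(u,v) = u^3 - v^2$ with modulus $q^4$. The polynomial $F$ is a genuine cubic (its leading cubic form $u^3$ is nonzero), and the set $\mathcal{M}(U,V,q)$ defined in \eqref{Definition M} counts exactly the solutions to $F(u,v) \equiv 0 \imod{q^4}$ in the box $|u| \leq U$, $|v| \leq V$ with the coprimality condition $\gcd(uv,q) = 1$. I would check that our setup fits the exact template of \cite[Theorem~$1$.$2$]{MR3100953}---in particular that the coprimality hypothesis on $uv$ is the one under which their bound is stated---and then read off the five-term upper bound with the usual $(UVq)^\varepsilon$ loss. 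No further work is required for the first part.

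For the second statement, the plan is simply to verify that under the hypotheses $q \ll U^{1/2}$ and $U \ll V^{2/3}$ the main term $UV/q^4$ dominates each of the other four terms. Concretely, $V/q^2 \leq UV/q^4$ is equivalent to $q^2 \leq U$; the bound $U^{3/4} V^{1/2}/q^2 \leq UV/q^4$ is equivalent to $q^2 \leq U^{1/4} V^{1/2}$, which follows from $q^2 \ll U$ together with $U^{3/4} \leq V^{1/2}$ (a consequence of $U \ll V^{2/3}$); the bound $q^2 U/V \leq UV/q^4$ is equivalent to $q^6 \leq V^2$, which follows from $q^3 \ll U^{3/2} \ll V$; and finally $q^2/U^{1/2} \leq UV/q^4$ is equivalent to $q^6 \leq U^{3/2} V$, which follows from $q^6 \ll U^3 = U^{3/2} \cdot U^{3/2} \leq U^{3/2} V$. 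After this term-by-term comparison, only $UV/q^4$ survives, and the factor $(UVq)^\varepsilon$ can be absorbed into $V^\varepsilon$ (with a slightly larger $\varepsilon$) since $U, q \ll V$.

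Since Lemma~\ref{Baier-Browning} is presented as a particular case of an already established result, there is no real obstacle: the only substantive content is the identification of our congruence with the one treated in \cite{MR3100953} and the elementary case analysis above. The deep input---which lies entirely inside the work of Baier and Browning---is used as a black box.
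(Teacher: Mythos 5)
Your proposal is correct and matches the paper's approach exactly: the paper simply cites the first bound as a particular case of Baier--Browning \cite[Theorem~$1$.$2$]{MR3100953} (applied with modulus $q^4$) and leaves the deduction of the second bound implicit. Your explicit term-by-term verification that $UV/q^4$ dominates under $q \ll U^{1/2}$ and $U \ll V^{2/3}$, and that $(UVq)^\varepsilon$ can be absorbed into $V^\varepsilon$ since $U, q \ll V$, correctly fills in the omitted ``Therefore'' step.
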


It is clear that $\mathcal{M}(U,V,q)$ is expected to be of size $UV/q^4$ provided that $U$ and $V$ are large enough in terms of $q$. Therefore, we see that the upper bound available under the assumptions $q \ll U^{1/2}$ and $U \ll V^{2/3}$ is as good as possible up to the factor $V^{\varepsilon}$.

It is also worth pointing out that Lemma \ref{Baier-Browning} is stronger than the upper bound which would follow from a direct application of Weil's bound (consequence of the Riemann Hypothesis for curves defined over finite fields) for the exponential sums involved in this counting problem.

\subsection{On the number of elliptic curves with a non-torsion rational point of small height}

For $X, T \geq 1$, we define
\begin{equation}
\label{Definition N}
\mathcal{N}(X,T) = \# \{ E_{A,B} \in \mathcal{F}(X) : \mu(E_{A,B}) \leq \log T \}.
\end{equation}
In the following lemma, we establish an upper bound for the quantity $\mathcal{N}(X,T)$ which shall be considered as the key tool in the proof of Theorem \ref{Main Theorem}. We remark that this upper bound is nontrivial provided that $T \leq X^{7/24-\eta}$ for some
$\eta > 0$. Moreover, we should definitely stress that any improvement on this exponent would directly translate into an improvement on the constant $7/24$ appearing in the statement of Theorem \ref{Main Theorem}.

\begin{lemma}
\label{Key Lemma}
Let $\delta \in (0,1/6)$ be fixed. For $X, T \geq 1$, we have the upper bound
\begin{equation*}
\mathcal{N}(X,T) \ll (XT)^{2 \delta} \left( X^{3/4} + X^{5/12} T + X^{1/4} T^2 +  X^{-1/12} T^3 \right) + X^{5/6 - 5\delta/6}.
\end{equation*}
\end{lemma}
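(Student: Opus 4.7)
The plan is to reduce the counting problem to estimating integer solutions of a Weierstrass equation in five variables and then apply Lemma \ref{Baier-Browning}. First, I would invoke Lemma \ref{Size F delta} to discard all curves outside $\mathcal{F}_{\delta}(X)$, at the cost of the error term $O(X^{5/6 - 5 \delta/6})$. For a remaining curve $E_{A,B} \in \mathcal{F}_{\delta}(X)$ with $\mu(E_{A,B}) \leq \log T$, choose a non-torsion $P \in E_{A,B}(\mathbb{Q})$ with $\hat{h}_{E_{A,B}}(P) \leq \log T$; Lemma \ref{Difference} then yields $h_x(P) \leq 2 \log T + \delta \log X + 6$, so $x(P)$ has numerator and denominator $\ll X^{\delta} T^2$. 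Lemma \ref{Parametrization} represents $P$ as $(wq : y : q^3)$ with $\gcd(w, q) = 1$ and $y^2 = w^3 + A w q^4 + B q^6$, while the height bound becomes $q \ll (X^{\delta} T^2)^{1/3}$ and $|w| \ll X^{\delta} T^2/q$.

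Thus $\mathcal{N}(X, T)$ is bounded, up to the error from Lemma \ref{Size F delta}, by the number of integer quintuples $(A, B, w, y, q)$ satisfying the above equation, the bounds $|A| \leq (X/4)^{1/3}$ and $|B| \leq (X/27)^{1/2}$, the height constraints above, and $\gcd(w, q) = 1$. Since $P$ is non-torsion, one has $y \neq 0$, and the equation then forces $\gcd(y, q) = 1$. To count these quintuples I would fix $q$ and reduce the equation modulo $q^4$ to obtain the $A, B$-free congruence $y^2 \equiv w^3 \pmod{q^4}$; then Lemma \ref{Baier-Browning} applied with $U \asymp X^{\delta} T^2 / q$ and $V$ derived from $|y|^2 \ll U^3 + X^{1/3} U q^4 + X^{1/2} q^6$ controls the number of valid pairs $(w, y)$. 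For each such $(w, y, q)$ the remaining equation $A w + B q^2 = (y^2 - w^3)/q^4$ is linear in $(A, B)$; since $\gcd(w, q^2) = 1$ its integer solutions form the one-parameter family $(A_0 + q^2 t, B_0 - w t)$, and the number of $t$ keeping $(A, B)$ inside the height box is $\ll 1 + \min(X^{1/3}/q^2, X^{1/2}/|w|)$.

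Finally, I would sum this bound dyadically over $|w|$ and $q$, combining each of the five Baier-Browning regimes with the two alternatives in the $\min$. The four main terms $X^{3/4}$, $X^{5/12} T$, $X^{1/4} T^2$ and $X^{-1/12} T^3$ in the target estimate should arise from distinct combinations: the clean contribution $UV/q^4$ of Lemma \ref{Baier-Browning} together with the dominant $(A, B)$-count accounts for the main terms in the favorable range $q \ll U^{1/2}$, $U \ll V^{2/3}$, while the auxiliary Baier-Browning terms $V/q^2$, $U^{3/4} V^{1/2}/q^2$, $q^2 U/V$ and $q^2/U^{1/2}$ must be shown to be absorbed elsewhere; the $(UVq)^{\varepsilon}$ losses together with the $X^{\delta}$ slack from the height reduction are absorbed into the factor $(XT)^{2 \delta}$. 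The main obstacle is precisely this regime bookkeeping: the five Baier-Browning terms crossed with the two $\min$ branches and dyadic values of $q$ and $|w|$ produce a sizeable case analysis, and one must verify that the auxiliary terms never dominate outside the favorable range. The degenerate configurations $w = 0$ or $q = 1$, where the congruence modulo $q^4$ is trivial or the coprimality condition of Lemma \ref{Baier-Browning} is vacuous, can be treated separately and should contribute only lower order terms.
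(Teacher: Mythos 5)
Your reduction steps (Lemma~\ref{Size F delta}, Lemma~\ref{Difference}, Lemma~\ref{Parametrization}, then viewing the Weierstrass equation as a congruence modulo $q^4$ and a linear condition on $(A,B)$) match the paper, but there are two problems, one small and one structural.

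The small one: since $\gcd(w,q)=1$ the fraction $x(P)=wq/q^3$ reduces to $w/q^2$, so the height bound $h_x(P)\le 2\log T+\delta\log X+O(1)$ gives $\max\{|w|,q^2\}\ll X^{\delta}T^2$, i.e.\ $|w|\ll X^{\delta}T^2$ and $q\ll (X^{\delta}T^2)^{1/2}$. Your constraints $q\ll(X^{\delta}T^2)^{1/3}$, $|w|\ll X^{\delta}T^2/q$ are strictly stronger and therefore exclude admissible quintuples, which is not allowed when proving an upper bound.

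The structural one is that applying Lemma~\ref{Baier-Browning} uniformly, with $V$ taken to be the magnitude of $y$, will not give the stated bound. When $|w|\gg X^{1/6}q^2$ the term $w^3$ dominates, so $y$ is confined to a \emph{short interval} around $w^{3/2}$ of length $\ll X^{1/3}q^4/w^{1/2}$, while $|y|$ itself is of size $w^{3/2}$. Lemma~\ref{Baier-Browning} counts solutions in a box $|v|\le V$ centred at the origin, so one is forced to take $V\asymp w^{3/2}$, and the main term $UV/q^4$ becomes $\asymp W^{5/2}/q^4$ on a dyadic block $|w|\asymp W$; multiplied by the $(A,B)$-count this is far larger than $X^{5/6}$ once $T$ is near $X^{7/24}$ and $q$ is small, so the approach overcounts by a power of $X$. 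The paper avoids this by splitting into $|w|>X^{1/6}q^2$ and $|w|\le X^{1/6}q^2$. In the first range it does \emph{not} use Baier--Browning at all: it exploits the short interval directly, bounding the number of $y$ in an interval of length $X^{1/3}q^4/w^{1/2}$ lying in one of $O(q^{\varepsilon})$ residue classes mod $q^4$ by $q^{\varepsilon}(X^{1/3}/w^{1/2}+1)$, and uses the complementary trivial bound $X^{1/2}/w+1$ for $B$. Lemma~\ref{Baier-Browning} is only invoked in the second range, where $|y|\ll X^{1/4}q^3$ is a genuine box about the origin and the hypotheses $q\ll U^{1/2}$, $U\ll V^{2/3}$ can be verified with $U=\min\{T^2X^{\delta},X^{1/6}q^2\}$ and $V=2X^{1/4}q^3$. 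Without this dichotomy and the short-interval congruence count, your dyadic bookkeeping cannot close.
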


\begin{proof}
Recall the definition \eqref{Definition F_delta} of $\mathcal{F}_{\delta}(X)$. Our first aim is to apply Lemma~\ref{Difference}. In order to do so, we let $\delta \in (0,1/6)$ and we note that Lemma~\ref{Size F delta} implies in particular that
\begin{equation}
\label{Intermediate 1}
\mathcal{N}(X,T) = \mathcal{N}_{\delta}(X,T) + O (X^{5/6 - 5\delta/6}),
\end{equation}
where we have set
\begin{equation*}
\mathcal{N}_{\delta}(X,T) = \# \{ E_{A,B} \in \mathcal{F}_{\delta}(X) : \mu(E_{A,B}) \leq \log T \}.
\end{equation*}
In addition, we clearly have
\begin{equation*}
\mathcal{N}_{\delta}(X,T) \leq \sum_{E_{A,B} \in \mathcal{F}_{\delta}(X)}
\# \{ P \in E_{A,B}(\mathbb{Q}) \smallsetminus E_{A,B}(\mathbb{Q})_{\tors} : \hat{h}_{E_{A,B}}(P) \leq \log T \}.
\end{equation*}
Using Lemma \ref{Difference}, we obtain
\begin{equation*}
\mathcal{N}_{\delta}(X,T) \ll \sum_{E_{A,B} \in \mathcal{F}_{\delta}(X)}
\# \{ P \in E_{A,B}(\mathbb{Q}) \smallsetminus E_{A,B}(\mathbb{Q})_{\tors} : \exp h_x(P) \ll T^2 X^{\delta} \}.
\end{equation*}
Now that we have used Lemma \ref{Difference}, we no longer need to restrict our summation to the set $\mathcal{F}_{\delta}(X)$. We thus write
\begin{equation*}
\mathcal{N}_{\delta}(X,T) \ll \sum_{E_{A,B} \in \mathcal{F}(X)}
\# \{ P \in E_{A,B}(\mathbb{Q}) \smallsetminus E_{A,B}(\mathbb{Q})_{\tors} : \exp h_x(P) \ll T^2 X^{\delta} \}.
\end{equation*}
It follows from Lemma \ref{Parametrization} that
\begin{equation*}
\mathcal{N}_{\delta}(X,T) \ll \# \mathcal{S}(X,T),
\end{equation*}
where we have set
\begin{equation*}
\mathcal{S}(X,T) = 
\left\{ (A, B,w,y,q) \in \mathbb{Z}^3 \times \mathbb{Z}_{\geq 1}^2 :
\begin{array}{l}
y^2 = w^3 + A w q^4 + B q^6 \\
4 |A|^3, 27 B^2 \leq X \\
w, q^2 \ll T^2 X^{\delta} \\
\gcd(w, q) = 1
\end{array}
\right\}.
\end{equation*}
We define
\begin{equation*}
\mathcal{S}^{(1)}(X,T) = \{ (A, B,w,y,q) \in \mathcal{S}(X,T) : |w| > X^{1/6} q^2 \},
\end{equation*}
and $\mathcal{S}^{(2)}(X,T) = \mathcal{S}(X,T) \smallsetminus \mathcal{S}^{(1)}(X,T)$, so that
\begin{equation}
\label{S1 S2}
\mathcal{N}_{\delta}(X,T) \ll \# \mathcal{S}^{(1)}(X,T) + \# \mathcal{S}^{(2)}(X,T).
\end{equation}

We start by proving an upper bound for the cardinality of the set $\mathcal{S}^{(1)}(X,T)$. We note that the equation
$y^2 = w^3 + A w q^4 + B q^6$ and the condition $|w| > X^{1/6} q^2$ imply that $w > 0$ and
\begin{equation*}
y^2 = w^3 \left( 1 + O \left( \frac{X^{1/3} q^4}{w^2} \right) \right).
\end{equation*}
Therefore, we also have
\begin{equation*}
y = w^{3/2} + O \left( \frac{X^{1/3} q^4}{w^{1/2}} \right).
\end{equation*}
We thus get
\begin{equation*}
\# \mathcal{S}^{(1)}(X,T) \ll
\sum_{\substack{w, q^2 \ll T^2 X^{\delta} \\ w > X^{1/6} q^2 \\ \gcd(w, q) = 1}}
\# \left\{ (A,B,y) \in \mathbb{Z}^2 \times \mathbb{Z}_{\geq 1} :
\begin{array}{l}
y^2 = w^3 + A w q^4 + B q^6 \\
|A|^3, B^2 \leq X \\
y - w^{3/2} \ll X^{1/3} q^4 w^{-1/2}
\end{array}
\right\}.
\end{equation*}
For fixed $(w,q) \in \mathbb{Z}_{\geq 1}^2$ satisfying $\gcd(w, q) = 1$ and for fixed $y \in \mathbb{Z}_{\geq 1}$ and
$B \in \mathbb{Z}$, we see that the equation $y^2 = w^3 + A w q^4 + B q^6$ has a solution $A \in \mathbb{Z}$ if and only if the two congruences $y^2 = B q^6 \imod{w}$ and $y^2 = w^3 \imod{q^4}$ hold. It follows from this observation that
\begin{equation*}
\# \mathcal{S}^{(1)}(X,T) \ll
\sum_{\substack{w, q^2 \ll T^2 X^{\delta} \\ w > X^{1/6} q^2 \\ \gcd(w, q) = 1}}
\# \left\{ (B,y) \in \mathbb{Z} \times \mathbb{Z}_{\geq 1} :
\begin{array}{l}
y^2 = B q^6 \imod{w} \\
y^2 = w^3 \imod{q^4} \\
|B| \leq X^{1/2} \\
y - w^{3/2} \ll X^{1/3} q^4 w^{-1/2}
\end{array}
\right\}.
\end{equation*}
For fixed $(w,q) \in \mathbb{Z}_{\geq 1}^2$ satisfying $\gcd(w, q) = 1$ and fixed $y \in \mathbb{Z}_{\geq 1}$, we use the trivial upper bound 
\begin{equation*}
\# \left\{ B \in \mathbb{Z} :
\begin{array}{l}
|B| \leq X^{1/2} \\
y^2 = B q^6 \imod{w}
\end{array}
\right\} \ll \frac{X^{1/2}}{w} + 1.
\end{equation*}
We deduce
\begin{equation*}
\# \mathcal{S}^{(1)}(X,T) \ll
\sum_{\substack{w, q^2 \ll T^2 X^{\delta} \\ w > X^{1/6} q^2 \\ \gcd(w, q) = 1}}
\# \left\{ y \in \mathbb{Z}_{\geq 1} :
\begin{array}{l}
y^2 = w^3 \imod{q^4} \\
y - w^{3/2} \ll X^{1/3} q^4 w^{-1/2}
\end{array}
\right\}
\left( \frac{X^{1/2}}{w} + 1\right).
\end{equation*}
For fixed $(w,q) \in \mathbb{Z}_{\geq 1}^2$ satisfying $\gcd(w, q) = 1$, we make use of the upper bound
\begin{equation*}
\# \left\{ y \in \mathbb{Z}_{\geq 1} :
\begin{array}{l}
y^2 = w^3 \imod{q^4} \\
y - w^{3/2} \ll X^{1/3} q^4 w^{-1/2}
\end{array}
\right\} \ll q^{\varepsilon} \left( \frac{X^{1/3}}{w^{1/2}} + 1 \right),
\end{equation*}
for any fixed $\varepsilon > 0$. This gives
\begin{equation*}
\# \mathcal{S}^{(1)}(X,T) \ll (X T)^{\varepsilon}
\sum_{\substack{w, q^2 \ll T^2 X^{\delta} \\ w > X^{1/6} q^2}}
\left( \frac{X^{1/3}}{w^{1/2}} + 1 \right) \left( \frac{X^{1/2}}{w} + 1\right).
\end{equation*}
We finally obtain
\begin{equation}
\label{S1}
\# \mathcal{S}^{(1)}(X,T) \ll (X T)^{\varepsilon}
\left( X^{3/4} + X^{5/12 + \delta/2} T + X^{1/4 + \delta} T^2 +  X^{-1/12 + 3 \delta/2} T^3 \right).
\end{equation}

We now prove an upper bound for the cardinality of the set $\mathcal{S}^{(2)}(X,T)$. We note that the equation
$y^2 = w^3 + A w q^4 + B q^6$ and the condition $|w| \leq X^{1/6} q^2$ imply that $y \leq 2 X^{1/4} q^3$ and
\begin{equation}
\label{B}
B = \frac{y^2 - w^3}{q^6} + O \left( \frac{X^{1/3} w}{q^2} \right).
\end{equation}
Therefore, we have
\begin{equation*}
\# \mathcal{S}^{(2)}(X,T) \ll \sum_{q \ll T X^{\delta/2}}
\# \left\{ (A,B,w,y) \in \mathbb{Z}^3 \times \mathbb{Z}_{\geq 1} :
\begin{array}{l}
y^2 = w^3 + A w q^4 + B q^6 \\
\eqref{B} \\
w \ll \min \{ T^2 X^{\delta}, X^{1/6} q^2 \} \\
y \leq 2 X^{1/4} q^3 \\
\gcd(w, q) = 1
\end{array}
\right\}.
\end{equation*}
Recall the definition \eqref{Definition M} of $\mathcal{M}(U,V,q)$. Reasoning as in the first case to count the number of admissible
$(A,B) \in \mathbb{Z}^2$ for fixed $(w,q) \in \mathbb{Z} \times \mathbb{Z}_{\geq 1}$ satisfying $\gcd(w, q) = 1$ and fixed
$y \in \mathbb{Z}_{\geq 1}$, we obtain
\begin{equation*}
\# \mathcal{S}^{(2)}(X,T) \ll \sum_{q \ll T X^{\delta/2}}
\mathcal{M}(\min \{ T^2 X^{\delta}, X^{1/6} q^2 \}, 2 X^{1/4} q^3, q) \left( \frac{X^{1/3}}{q^2} + 1\right).
\end{equation*}
Since we clearly have the upper bounds 
\begin{equation*}
q \ll \min \{ T^2 X^{\delta}, X^{1/6} q^2 \}^{1/2},
\end{equation*}
and
\begin{equation*}
\min \{T^2 X^{\delta}, X^{1/6} q^2 \} \ll (X^{1/4} q^3)^{2/3},
\end{equation*}
Lemma \ref{Baier-Browning} gives
\begin{equation*}
\mathcal{M}(\min \{ T^2 X^{\delta}, X^{1/6} q^2 \}, 2 X^{1/4} q^3, q) \ll (XT)^{\varepsilon}
\frac{\min \{T^2 X^{\delta}, X^{1/6} q^2 \} X^{1/4}}{q}.
\end{equation*}
Therefore, we deduce that
\begin{equation*}
\# \mathcal{S}^{(2)}(X,T) \ll (XT)^{\varepsilon}
\sum_{q \ll T X^{\delta/2}} \left( \frac{X^{3/4}}{q} + \frac{X^{1/4 + \delta} T^2}{q} \right),
\end{equation*}
which eventually gives
\begin{equation}
\label{S2}
\# \mathcal{S}^{(2)}(X,T) \ll (XT)^{\varepsilon} \left(X^{3/4} + X^{1/4 + \delta} T^2 \right).
\end{equation}
Recalling the upper bounds \eqref{S1 S2}, \eqref{S1} and \eqref{S2}, we see that we have
\begin{equation*}
\mathcal{N}_{\delta}(X,T) \ll (XT)^{\varepsilon}
\left( X^{3/4} + X^{5/12 + \delta/2} T + X^{1/4 + \delta} T^2 +  X^{-1/12 + 3 \delta/2} T^3 \right).
\end{equation*}
Recalling the estimate \eqref{Intermediate 1}, we eventually obtain
\begin{equation*}
\mathcal{N}(X,T) \ll (XT)^{\varepsilon}
\left( X^{3/4} + X^{5/12 + \delta/2} T + X^{1/4 + \delta} T^2 +  X^{-1/12 + 3 \delta/2} T^3 \right) + X^{5/6 - 5\delta/6}.
\end{equation*}
Choosing $\varepsilon = \delta/2$ completes the proof.
\end{proof}

\subsection{On the number of elliptic curves with positive rank}

We now give a lower bound for the cardinality of the set $\mathcal{F}_{\geq 1}(X)$. The result of Bhargava and Skinner
\cite[Theorem $1$]{MR3237733} (later refined by the same authors together with Zhang \cite[Theorem $3$]{BhargavaSkinnerZhang}) states that
\begin{equation*}
\# \{ E_{A,B} \in \mathcal{F}(X) : \rank E_{A,B}(\mathbb{Q}) = 1 \} \gg \# \mathcal{F}(X),
\end{equation*}
so in particular we have the following lemma.

\begin{lemma}
\label{Bhargava-Skinner}
We have the lower bound 
\begin{equation*}
\# \mathcal{F}_{\geq 1}(X) \gg \# \mathcal{F}(X).
\end{equation*}
\end{lemma}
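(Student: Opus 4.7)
The plan is immediate from the displayed lower bound stated just before the lemma. The strategy reduces to the trivial inclusion
\begin{equation*}
\{ E_{A,B} \in \mathcal{F}(X) : \rank E_{A,B}(\mathbb{Q}) = 1 \} \subseteq \mathcal{F}_{\geq 1}(X),
\end{equation*}
which holds because any curve of rank exactly $1$ has, in particular, rank at least $1$. First I would take cardinalities in this inclusion, and then invoke the Bhargava--Skinner theorem \cite[Theorem $1$]{MR3237733} (or its refinement \cite[Theorem $3$]{BhargavaSkinnerZhang}) in the form recalled above the statement of the lemma, namely that
\begin{equation*}
\# \{ E_{A,B} \in \mathcal{F}(X) : \rank E_{A,B}(\mathbb{Q}) = 1 \} \gg \# \mathcal{F}(X).
\end{equation*}
Combining the two displays yields the desired lower bound.

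There is no genuine obstacle here: all of the arithmetic difficulty is absorbed into the Bhargava--Skinner input, whose proof rests on their work on average sizes of $5$-Selmer groups together with the methods of Bhargava and Shankar on ranks of elliptic curves. The point of isolating this statement as a separate lemma is bookkeeping for the proof of Theorem~\ref{Main Theorem}: to turn a statement of the shape $\mathcal{N}(X,T) = o(\# \mathcal{F}(X))$ coming from Lemma~\ref{Key Lemma} (applied with $T = H(E_{A,B})^{c}$ for $c < 7/24$) into the density-one conclusion, one needs the denominator $\# \mathcal{F}_{\geq 1}(X)$ to be of the same order as $\# \mathcal{F}(X)$, and it is precisely this comparison that the lemma supplies.
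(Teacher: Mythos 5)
Your proof is correct and is exactly the paper's argument: the lemma is an immediate consequence of the Bhargava--Skinner bound recalled just above it, via the trivial inclusion of rank-one curves into $\mathcal{F}_{\geq 1}(X)$. Nothing further is needed.
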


\section{Proof of Theorem \ref{Main Theorem}}

Our aim is to prove that for any fixed $\vartheta \in (0,7/24)$, we have the equality
\begin{equation*}
\lim_{X \to \infty}
\frac{\# \{ E_{A,B} \in \mathcal{F}_{\geq 1}(X) : \mu(E_{A,B}) \leq (7/24 - \vartheta) \log H(E_{A,B}) \}}{\# \mathcal{F}_{\geq 1}(X)} = 0.
\end{equation*}
Recalling the definition \eqref{Definition N} of $\mathcal{N}(X,T)$, we see that
\begin{equation*}
\# \{ E_{A,B} \in \mathcal{F}_{\geq 1}(X) : \mu(E_{A,B}) \leq (7/24 - \vartheta) \log H(E_{A,B}) \} \leq
\mathcal{N}(X, X^{7/24 - \vartheta}).
\end{equation*}
We can clearly assume without loss of generality that $\vartheta < 1/24$. Therefore, for any fixed $\delta \in (0,1/6)$,
Lemma \ref{Key Lemma} implies that
\begin{equation*}
\# \{ E_{A,B} \in \mathcal{F}_{\geq 1}(X) : \mu(E_{A,B}) \leq (7/24 - \vartheta) \log H(E_{A,B}) \} \ll
X^{5/6 - 2 \vartheta + 3 \delta} + X^{5/6 - 5 \delta /6}.
\end{equation*}
Choosing for instance $\delta = 2 \vartheta/5$, we deduce that
\begin{equation*}
\# \{ E_{A,B} \in \mathcal{F}_{\geq 1}(X) : \mu(E_{A,B}) \leq (7/24 - \vartheta) \log H(E_{A,B}) \} \ll X^{5/6 - \vartheta/3}.
\end{equation*}
Using Lemma \ref{Bhargava-Skinner} and the estimate \eqref{Size F}, we eventually obtain
\begin{equation*}
\frac{\# \{ E_{A,B} \in \mathcal{F}_{\geq 1}(X) : \mu(E_{A,B}) \leq (7/24 - \vartheta) \log H(E_{A,B}) \}}{\# \mathcal{F}_{\geq 1}(X)} \ll X^{-\vartheta/3},
\end{equation*}
which completes the proof of Theorem \ref{Main Theorem}.

\bibliographystyle{amsalpha}
\bibliography{biblio}

\end{document}